\newtheorem{thm}{Theorem}
\newtheorem{lem}[thm]{Lemma}
\theoremstyle{definition}
\theoremstyle{remark}
\newcommand{\R}{\mathbb R}
\newcommand{\p}{\partial}
\begin{document}

\title[Frequency gaps between integers]{A note on the frequency gaps between integers in the thin obstacle problem}
\author{Federico Franceschini}
\address{Institute for Advanced Study, Princeton, NJ}\email{ffederico@ias.edu }
\author{Ovidiu Savin}
\address{Columbia University, New York, NY}\email{savin@math.columbia.edu}

\begin{abstract} We give a simple proof of the fact that --- in all dimensions --- there are no homogeneous solutions to the thin obstacle problem with frequency $\lambda$ belonging to intervals of the form $(2k,2k+1)$, $k \in \mathbb N$. In particular, there are no frequencies in the interval $(2,3)$.
\end{abstract}

\maketitle

\section{Introduction}
In this short note we give a simple proof of the fact that there are no homogenous solutions to the thin obstacle problem in $\R^n$ with frequency $\lambda$ belonging to intervals of the form $(2k,2k+1)$, $k \in \mathbb N$.

We recall that a solution to the the thin obstacle problem in $B_1 \subset \R^n$ is a continuous function $u$, even with respect to the thin space $\{x_n=0\}$, that satisfies
$$ \triangle u = 0 \quad \mbox{in} \quad B_1 \setminus Z(u), \quad \quad Z(u):=\{x_n=0\} \cap \{u=0\},$$
and
\begin{equation}\label{0}
u \ge 0\text{ and }u_n \le 0\text{ on }\{x_n=0\},
\end{equation}
where $u_n$ denotes the one-sided directional derivative in the positive $x_n$ direction.
 
 The homogeneous solutions are of particular interest since they appear in the blow-up analysis at points on the free boundary 
 $$\Gamma(u)=\p_{\{x_n=0\}} \, Z(u).$$ 
The homogeneity $\lambda$ of a homogeneous solution is called its frequency. We define the set of all possible frequencies as  
$$ \Lambda := \{ \lambda > 0 : \text{there exists a nonzero $\lambda$-homogeneous solution in $\R^n,$ for some $n$} \}. $$

The only known examples of homogeneous solutions are based on extensions of the explicit solutions in dimension $n=2$, and it implies that 
\begin{equation}\label{1}
\mathbb{N} \cup \left \{ 2k+ \tfrac 32 : k \in \mathbb N \right \} \, \subset \Lambda.
\end{equation}
Athanasopoulous-Caffarelli-Salsa in \cite{ACS} classified the lowest frequencies and showed that
$$\Lambda \cap (0,2)=\left \{1,\tfrac 32 \right\},$$
which in turn gives the optimal regularity of solutions. 

The homogenous solutions and the part of the free boundary $\Gamma(u)$ associated with a frequency $\lambda \in \Lambda$ are well understood when $\lambda$ is an integer (or close to an integer), and we refer to the works Garofalo-Petrosyan \cite{GP}, Colombo-Spolaor-Velichkov \cite{CSV}, Figalli-Ros Oton-Serra \cite{FRS}, Savin-Yu \cite{SY}. On the other hand, essentially nothing is known for frequencies $\lambda$ that do not belong to the left side of \eqref{1}, or even if they exist.

Recently, in \cite{FS}, we construct homogeneous solutions with frequency different than the 2-dimensional ones. This means that the inclusion in \eqref{1} is strict as soon as $n \ge 3$, and that $\Lambda$ could be a fairly large and complicated set. The frequencies in our examples belong to intervals of the form $(2k+1,2k+2)$ which lead us to realize that $\Lambda$ cannot intersect the other intervals $(2k,2k+1)$ due to a short computation that seems to have not been noticed before in the literature.  

We state our main result.
\begin{thm}\label{M} Let $u$ be a $\lambda$-homogeneous solution to the thin obstacle problem in $\R^n,$ for some $\lambda>0$ and $n\ge1$. Then
$$ \lambda \notin \quad \bigcup_{k \in \mathbb N}(2k,2k+1).$$
\end{thm}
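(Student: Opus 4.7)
My plan is to argue by contradiction: suppose $u\not\equiv 0$ is $\lambda$-homogeneous with $\lambda=2k+\alpha$, $\alpha\in(0,1)$, $k\in\mathbb N$. The case $k=0$ is already contained in \cite{ACS}, so assume $k\ge 1$.

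The starting point is the distributional identity
\[
\Delta u=2\,g(x')\,\delta(x_n),\qquad g:=u_n(\,\cdot\,,0^+)\le 0,
\]
which follows from harmonicity of $u$ off the thin space, the evenness of $u$ in $x_n$, and the Signorini inequality; here $g$ is $(\lambda-1)$-homogeneous on $\R^{n-1}$. Writing $\Delta=\Delta_{x'}+\p_n^2$ and iterating via Pascal's identity then yields
\[
\Delta^k u=2\sum_{j=0}^{k-1}\binom{k-1}{j}\bigl(\Delta_{x'}^{k-1-j}g\bigr)(x')\otimes\delta^{(2j)}(x_n),
\]
a distribution supported on $\{x_n=0\}$ and of homogeneity $\alpha\in(0,1)$ on $\R^n$. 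It is nontrivial: if it vanished, $u$ would be a harmonic polynomial, forcing $\lambda\in\mathbb N$, a contradiction.

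I then pair $\Delta^k u$ against the test function $\phi(x)=\eta(x')\,x_n^{2(k-1)}\,\rho(x_n)$, with $\eta\ge0$ smooth and compactly supported and $\rho\in C_c^\infty(\R)$ an even cutoff equal to $1$ near $0$. Since $\phi$ vanishes to order $2(k-1)$ in $x_n$, only the term $j=k-1$ survives in the expansion above, giving
\[
\langle\Delta^k u,\phi\rangle=2(2k-2)!\int_{\R^{n-1}} g(x')\,\eta(x')\,dx'\le 0.
\]
The same pairing equals $\int u\,\Delta^k\phi\,dx$, and the goal is to compute this second expression independently and conclude that it is strictly positive; the resulting contradiction forces $g\equiv 0$ and hence $u\equiv 0$.

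The main obstacle — and the ``short computation'' the authors allude to — is precisely this second evaluation. The derivatives of the cutoff $\rho$ enter $\Delta^k\phi$ in a nontrivial way, and one has to show that, after a suitable scaling in $\rho$, the cutoff-dependent terms either drop out or contribute with the correct sign. The hypothesis $\alpha\in(0,1)$ (i.e.\ $\lambda\notin\mathbb N$) should play a decisive role here, for instance by ruling out harmonic polynomial corrections in the Riesz representation of $u$ as a potential of $\Delta u$. I expect that the final sign discrepancy reduces to a trigonometric factor like $\tan(\lambda\pi/2)$, whose sign changes across each integer and singles out precisely the forbidden intervals $(2k,2k+1)$.
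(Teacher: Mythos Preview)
Your proposal has a genuine gap, and you identify it yourself: the ``second evaluation'' of $\int u\,\Delta^k\phi$ is never carried out, only hoped for. Everything up to that point is correct bookkeeping --- the distributional identity $\Delta u=2g\,\delta_{\{x_n=0\}}$, the iterated formula for $\Delta^k u$, and the pairing that isolates the $j=k-1$ term --- but none of this uses $\lambda\in(2k,2k+1)$ in any essential way; it only uses $g\le 0$. The entire content of the theorem must therefore live in the sign of $\int u\,\Delta^k\phi$, and here you offer no mechanism. The function $\Delta^k\phi$ has no definite sign, $u$ has no definite sign off the thin space, and the dependence on the cutoffs $\eta,\rho$ is uncontrolled. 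Your expectation that a factor like $\tan(\lambda\pi/2)$ emerges after ``suitable scaling in $\rho$'' is not substantiated, and I do not see how to extract it from your setup without introducing a new ingredient.

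The paper's argument supplies exactly that missing ingredient and takes a different route altogether. Rather than iterating $\Delta$, it introduces the \emph{radial} $\lambda$-homogeneous harmonic function $p_\lambda$ on the half-space $\{x_n\ge 0\}$ (depending only on the angle $\varphi$ to the $x_n$-axis) and performs a single integration by parts on the upper half-sphere $S^+$ against $u$. This yields the identity
\[
-\,p_\lambda'(\tfrac\pi2)\int_{\partial S^+} u \;=\; p_\lambda(\tfrac\pi2)\int_{\partial S^+} u_n,
\]
so the whole question reduces to the signs of $p_\lambda(\tfrac\pi2)$ and $p_\lambda'(\tfrac\pi2)$. A Sturm--Liouville oscillation argument shows these signs are those of $\cos(\lambda\tfrac\pi2)$ and $-\sin(\lambda\tfrac\pi2)$ respectively, so their product is negative precisely on $(2k,2k+1)$; combined with $u\ge 0$ and $u_n\le 0$ on $\partial S^+$ this forces both boundary integrals to vanish, hence $u\equiv 0$. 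The trigonometric factor you anticipated is thus present, but it arises from the boundary behavior of the auxiliary function $p_\lambda$, not from manipulating $\Delta^k\phi$.
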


The proof of Theorem \ref{M} is based on an integration by parts on the unit sphere that involves the radial solution $p_\lambda$ homogeneous of degree $\lambda$ defined in the half-space $\{x_n \ge 0\}$. 
The key observation is that $p_\lambda$ and $\p_{x_n} p_\lambda$ have opposite signs on $\{x_n=0\}$ whenever $\lambda \in (2k,2k+1)$.

 \section{Proof of Theorem \ref{M}}

Denote by $S$ the unit sphere in $\R^n$ and by  $S^+:=S \cap \{x_n \ge 0\}$ the upper half-sphere.
Then $u \in C^{1, \frac 12} (S^+)$ satisfies the equation
$$L_\lambda u =0 \quad \mbox{on} \quad S \setminus Z(u), \quad \quad Z(u)=\{x_n=0\} \cap \{u=0\},$$
where 
$$ L_\lambda u:=\triangle_S u + \mu_\lambda u, \quad \quad \mu_\lambda:=\lambda(\lambda+n-2),$$
and $\triangle_S u$ is the spherical Laplacian.

Let $p_\lambda$ denote the radial solution with respect to the $x_n$ axis which is harmonic and homogeneous of degree $\lambda$ in the upper half space $x_n \ge 0$, and which takes value $1$ at $e_n$. In symbols
$$L_\lambda p_\lambda =0 \text{ on } S^+, \quad \quad p_\lambda(e_n)=1.$$
On $S^+$, $p_\lambda$ is expressed only in terms of the angle $\varphi$ with respect to the positive $x_n$ axis, and it is uniquely determined by the ODE
\begin{align}\label{2}
\begin{cases}
    p_\lambda'' + (n-2) \cot \varphi \cdot p_\lambda' + \mu_\lambda \cdot p_\lambda =0\quad \text { in }[0,\tfrac\pi2],\\
    p_\lambda(0)=1, \quad \quad p_\lambda'(0)=0.
\end{cases}
\end{align}
\begin{lem}\label{L}
The signs of $p_\lambda$ and $p_\lambda'$ at $\varphi=\frac \pi 2$ are given by $\cos(\lambda \frac \pi 2)$ and $-\sin(\lambda \frac \pi 2),$ respectively.
\end{lem}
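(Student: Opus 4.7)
The plan is to derive an integral representation for $p_\lambda$ which makes the trigonometric factors at $\varphi=\pi/2$ explicit. The starting point is the elementary identity that, for any isotropic vector $w\in \mathbb{C}^n$ (i.e.\ satisfying $w\cdot w=0$), the function $(w\cdot x)^\lambda$ is harmonic in $x\in\R^n$. For every unit vector $v$ in the equatorial hyperplane $\R^{n-1}\times\{0\}$, the vector $w = e_n + iv$ is isotropic, since $w\cdot w = 1-|v|^2 = 0$, and so $(x_n + i\,v\cdot x)^\lambda$ --- defined via the principal branch, which is well-defined on $\{x_n\ge 0\}$ --- is harmonic and $\lambda$-homogeneous on the closed upper half-space.

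Averaging over the equatorial sphere $S^{n-2}\subset \R^{n-1}\times\{0\}$, I define (assuming $n\ge 3$; the case $n=2$ is treated separately)
$$
I_\lambda(x) := \frac{1}{|S^{n-2}|}\int_{S^{n-2}}(x_n + i\,v\cdot x)^\lambda\,d\sigma(v).
$$
The function $I_\lambda$ is harmonic, $\lambda$-homogeneous, axially symmetric about the $x_n$-axis, and satisfies $I_\lambda(e_n) = 1$. By the uniqueness of the initial value problem \eqref{2}, its restriction to $S^+$ must coincide with $p_\lambda$.

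Evaluating at $\varphi=\pi/2$, i.e.\ at $x = e_1$, the integrand becomes $(iv_1)^\lambda$ with $v_1 := v\cdot e_1\in[-1,1]$. The involution $v\mapsto -v$ on $S^{n-2}$ (available because $n\ge 3$) preserves the measure and replaces $(iv_1)^\lambda$ by $(-iv_1)^\lambda$; with the principal branch one has $(iv_1)^\lambda + (-iv_1)^\lambda = 2|v_1|^\lambda\cos(\lambda\pi/2)$. Hence
$$
p_\lambda(\pi/2) = \cos(\lambda\pi/2)\cdot\frac{1}{|S^{n-2}|}\int_{S^{n-2}}|v_1|^\lambda\,d\sigma(v),
$$
and since the remaining integral is strictly positive this gives the claimed sign of $p_\lambda(\pi/2)$.

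For the derivative, the chain rule on the meridian $\varphi\mapsto(\sin\varphi,0,\ldots,0,\cos\varphi)$ gives $p_\lambda'(\pi/2) = -\p_{x_n}I_\lambda(e_1)$. Differentiating under the integral produces $\p_{x_n}I_\lambda(x) = \frac{\lambda}{|S^{n-2}|}\int (x_n + iv\cdot x)^{\lambda-1}d\sigma(v)$; the same symmetrization with exponent $\lambda-1$ produces the factor $\cos((\lambda-1)\pi/2)=\sin(\lambda\pi/2)$, so
$$
p_\lambda'(\pi/2) = -\lambda\sin(\lambda\pi/2)\cdot\frac{1}{|S^{n-2}|}\int_{S^{n-2}}|v_1|^{\lambda-1}\,d\sigma(v),
$$
whose sign is $-\sin(\lambda\pi/2)$. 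The main delicacy is purely technical: carefully specifying the principal branch of the complex power and verifying continuity as $x_n\to 0^+$ (the apparent singularity at $v_1=0$ being harmless as it has zero measure). The degenerate case $n = 2$ is handled directly since there the ODE \eqref{2} reduces to $p_\lambda'' + \lambda^2 p_\lambda = 0$, yielding $p_\lambda(\varphi) = \cos(\lambda\varphi)$ and the statement at once.
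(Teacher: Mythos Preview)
Your argument is correct and takes a genuinely different route from the paper. The paper proceeds qualitatively via Sturm--Liouville oscillation theory: it observes that the zeros of $p_\lambda$ and $p_\lambda'$ intertwine, shows that the number of zeros on $[0,\pi/2]$ is nondecreasing in $\lambda$ (by the Sturm comparison principle), identifies the integer values of $\lambda$ as precisely the thresholds at which $p_\lambda(\pi/2)$ or $p_\lambda'(\pi/2)$ vanishes (these being the Dirichlet and Neumann eigenvalues of $S^+$), and then tracks the endpoint signs by continuity in $\lambda$ starting from $p_0\equiv 1$. You instead construct an explicit integral representation of $p_\lambda$ --- essentially the Laplace-type integral for Gegenbauer functions extended to noninteger degree --- and read off the factors $\cos(\lambda\pi/2)$ and $\sin(\lambda\pi/2)$ directly after symmetrizing under $v\mapsto -v$. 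Your approach yields more information (exact formulas with explicit positive multiplicative constants, not just signs), at the cost of some care with complex branches and the limit $x_n\to 0^+$; the paper's approach is more elementary and its mechanism --- monotonicity of the zero count --- is the one that would persist in variable-coefficient or weighted generalizations where no closed-form integral is available.
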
 
In particular, if $\lambda$ belongs to the interval $(2k,2k+1)$ then 
$$p_\lambda(\tfrac{\pi}{2}) \cdot p_\lambda'(\tfrac{\pi}{2}) <0.$$

Integrating by parts on $S^+$ we find
$$0=\int_{S^+} L_\lambda p_\lambda \cdot u -p_\lambda \cdot L_\lambda u =\int_{S\cap \{x_n=0\}} u\p_\varphi p_\lambda  - p_\lambda \p_\varphi u,$$
hence, since $\p_\varphi u=-u_n$ and $\p_\varphi p_\lambda =p'_\lambda,$ we get
\begin{equation}\label{3}
- p'_{\lambda}(\tfrac \pi 2) \int_{S \cap \{x_n=0\} } u =p_\lambda(\tfrac \pi 2) \int_{S \cap \{x_n=0\} }u_{n}.    
\end{equation}

The sign conditions in \eqref{0} imply two sides of \eqref{3} have different signs if $\lambda \in (2k,2k+1)$, unless both integrals involving $u$ vanish. In this case we find that $u=0$ and $u_n=0$ on $\p S^+$, which implies that $u$ is both a Dirichlet and a Neumann eigenfunction in $S^+,$ thus $u$ is identically zero contradicting $\lambda>0$.

\begin{proof}[Proof of Lemma \ref{L}] The form of the ODE \eqref{2} implies that $p_\lambda$ oscillates, and the zeros of $p_\lambda$ and $p_\lambda'$ are intertwined. Indeed, near a critical point where $p_\lambda$ is positive (negative) we know that $p_\lambda$ is concave (convex), therefore $p_\lambda$ must change sign between two consecutive critical points. 

Moreover, the number of zeros of $p_\lambda$ in the interval $[0, \frac \pi 2]$ is can only increase as we increase $\lambda$. The reason is that between any two consecutive zeros of $p_\lambda$ there must be at least a zero of $p_{\lambda'}$ if $\lambda' > \lambda$. Otherwise in such and interval a positive multiple of $p_{\lambda'}$ can be touched by below by a positive multiple of $p_\lambda$ and we contradict the strong maximum principle at the contact point since $\mu_{\lambda'} > \mu_\lambda$. Finally, using the monotonicity of the first eigenvalue of a spherical cap, $p_{\lambda'}$ must vanish somewhere before the first zero of $p_\lambda$.

Notice also that $p_\lambda(\tfrac\pi2)=0$ precisely when $\lambda$ is an odd integer, and $p_\lambda'(\tfrac\pi2)=0$ precisely when $\lambda$ is an even integer. This is because they correspond to the cases when $p_\lambda$ is either a Dirichlet or Neumann eigenfunction on $S^+$.

The conclusion of the lemma now follows from the continuity of the family $p_\lambda$ with respect to the parameter $\lambda$, as we increase $\lambda$ from $0$ to infinity. When $\lambda=0$ we have $p_0 \equiv 1$. The remarks above imply that the number of zeros and critical points of $p_\lambda$ in the interval $[0,\pi/2]$ remains constant as $\lambda$ ranges between two consecutive integers, and it increases exactly by one each time $\lambda$ passes an integer. This means that at the end point $\pi/2$, $p_\lambda$ and its derivative have the signs of $\cos(\lambda \varphi)$ and its derivative. 

\end{proof}

\end{document}